\newtheorem{theorem}{Theorem}[section]
\newtheorem{proposition}[theorem]{Proposition}
\newtheorem{corollary}[theorem]{Corollary}
\newtheorem{prop-def}{Proposition-Definition}[section]
\newtheorem{coro-def}{Corollary-Definition}[section]
\newcommand{\nc}{\newcommand}
\nc{\tred}[1]{\textcolor{red}{#1}}
\nc{\tblue}[1]{\textcolor{blue}{#1}}
\nc{\tgreen}[1]{\textcolor{green}{#1}}
\nc{\tpurple}[1]{\textcolor{purple}{#1}}
\nc{\btred}[1]{\textcolor{red}{\bf #1}}
\nc{\btblue}[1]{\textcolor{blue}{\bf #1}}
\nc{\btgreen}[1]{\textcolor{green}{\bf #1}}
\nc{\btpurple}[1]{\textcolor{purple}{\bf #1}}
\renewcommand{\Bbb}{\mathbb}
\newcommand{\efootnote}[1]{}
\renewcommand{\textbf}[1]{}
\newcommand{\delete}[1]{}
\nc{\dfootnote}[1]{{}}          
\nc{\ffootnote}[1]{\dfootnote{#1}}
\nc{\mfootnote}[1]{\footnote{#1}} 
\nc{\ofootnote}[1]{\footnote{\tiny Older version: #1}}
\nc{\mlabel}[1]{\label{#1}}  
\nc{\mcite}[1]{\cite{#1}}  
\nc{\mref}[1]{\ref{#1}}  
\nc{\mbibitem}[1]{\bibitem{#1}} 
\nc{\mlabel}[1]{\label{#1}  
{\hfill \hspace{1cm}{\bf{{\ }\hfill(#1)}}}}
\nc{\mcite}[1]{\cite{#1}{{\bf{{\ }(#1)}}}}  
\nc{\mref}[1]{\ref{#1}{{\bf{{\ }(#1)}}}}  
\nc{\mbibitem}[1]{\bibitem[\bf #1]{#1}} 
\nc{\mtail}{\leq_t}
\nc{\mhead}{\leq_h}
\nc{\rk}{\mathrm{rk}}
\nc{\mset}[1]{\tilde{#1}}
\nc{\pa}{\frakL}
\nc{\arr}{\rightarrow}
\nc{\lu}[1]{(#1)}
\nc{\mult}{\mrm{mult}}
\nc{\diff}{\mathrm{Der}}
\nc{\indiff}{\mathrm{InDer}}
\nc{\outdiff}{\mathrm{OutDer}}
\nc{\conmat}{connection matrix\xspace}
\nc{\bounmat}{boundary matrix\xspace}
\nc{\pcyc}{\mathfrak c}
\nc{\calpa}{\calp_A}
\nc{\calpal}{\Gamma_{AL}}
\nc{\calpc}{\calp_L}
\nc{\frakDa}{\frakD_1}
\nc{\frakDal}{\frakD_2}
\nc{\frakDc}{\frakD_L}
\nc{\frakDv}{\frakD_V}
\nc{\frakDp}{\frakD_F}
\nc{\frakBa}{\frakB_1}
\nc{\frakBal}{\frakB_2}
\nc{\frakBc}{\frakB_L}
\nc{\frakBv}{\frakB_V}
\nc{\bin}[2]{ (_{\stackrel{\scs{#1}}{\scs{#2}}})}  
\nc{\binc}[2]{ \left (\!\! \begin{array}{c} \scs{#1}\\
    \scs{#2} \end{array}\!\! \right )}  
\nc{\bincc}[2]{  \left ( {\scs{#1} \atop
    \vspace{-1cm}\scs{#2}} \right )}  
\nc{\bs}{\bar{S}}
\nc{\cosum}{\sqsubset}
\nc{\la}{\longrightarrow}
\nc{\rar}{\rightarrow}
\nc{\dar}{\downarrow}
\nc{\dprod}{**}
\nc{\dap}[1]{\downarrow \rlap{$\scriptstyle{#1}$}}
\nc{\md}{\mathrm{dth}}
\nc{\uap}[1]{\uparrow \rlap{$\scriptstyle{#1}$}}
\nc{\defeq}{\stackrel{\rm def}{=}}
\nc{\disp}[1]{\displaystyle{#1}}
\nc{\dotcup}{\ \displaystyle{\bigcup^\bullet}\ }
\nc{\gzeta}{\bar{\zeta}}
\nc{\hcm}{\ \hat{,}\ }
\nc{\hts}{\hat{\otimes}}
\nc{\barot}{{\otimes}}
\nc{\free}[1]{\bar{#1}}
\nc{\uni}[1]{\tilde{#1}}
\nc{\hcirc}{\hat{\circ}}
\nc{\lleft}{[}
\nc{\lright}{]}
\nc{\lc}{\lfloor}
\nc{\rc}{\rfloor}
\nc{\curlyl}{\left \{ \begin{array}{c} {} \\ {} \end{array}
    \right .  \!\!\!\!\!\!\!}
\nc{\curlyr}{ \!\!\!\!\!\!\!
    \left . \begin{array}{c} {} \\ {} \end{array}
    \right \} }
\nc{\longmid}{\left | \begin{array}{c} {} \\ {} \end{array}
    \right . \!\!\!\!\!\!\!}
\nc{\onetree}{\bullet}
\nc{\ora}[1]{\stackrel{#1}{\rar}}
\nc{\ola}[1]{\stackrel{#1}{\la}}
\nc{\ot}{\otimes}
\nc{\mot}{{{\boxtimes\,}}}
\nc{\otm}{\overline{\boxtimes}}
\nc{\sprod}{\bullet}
\nc{\scs}[1]{\scriptstyle{#1}}
\nc{\mrm}[1]{{\rm #1}}
\nc{\margin}[1]{\marginpar{\rm #1}}   
\nc{\dirlim}{\displaystyle{\lim_{\longrightarrow}}\,}
\nc{\invlim}{\displaystyle{\lim_{\longleftarrow}}\,}
\nc{\mvp}{\vspace{0.3cm}}
\nc{\tk}{^{(k)}}
\nc{\tp}{^\prime}
\nc{\ttp}{^{\prime\prime}}
\nc{\svp}{\vspace{2cm}}
\nc{\vp}{\vspace{8cm}}
\nc{\proofbegin}{\noindent{\bf Proof: }}
\nc{\proofend}{$\blacksquare$ \vspace{0.3cm}}
\nc{\modg}[1]{\!<\!\!{#1}\!\!>}
\nc{\intg}[1]{F_C(#1)}
\nc{\lmodg}{\!<\!\!}
\nc{\rmodg}{\!\!>\!}
\nc{\cpi}{\widehat{\Pi}}
\nc{\sha}{{\mbox{\cyr X}}}  
\nc{\shap}{{\mbox{\cyrs X}}} 
\nc{\shpr}{\diamond}    
\nc{\shp}{\ast}
\nc{\shplus}{\shpr^+}
\nc{\shprc}{\shpr_c}    
\nc{\msh}{\ast}
\nc{\zprod}{m_0}
\nc{\oprod}{m_1}
\nc{\vep}{\varepsilon}
\nc{\labs}{\mid\!}
\nc{\rabs}{\!\mid}
\nc{\mmbox}[1]{\mbox{\ #1\ }}
\nc{\fp}{\mrm{FP}} \nc{\rchar}{\mrm{char}} \nc{\End}{\mrm{End}} \nc{\Fil}{\mrm{Fil}}
\nc{\Mor}{Mor\xspace}
\nc{\gmzvs}{gMZV\xspace}
\nc{\gmzv}{gMZV\xspace}
\nc{\mzv}{MZV\xspace}
\nc{\mzvs}{MZVs\xspace}
\nc{\Hom}{\mrm{Hom}} \nc{\id}{\mrm{id}} \nc{\im}{\mrm{im}}
\nc{\incl}{\mrm{incl}} \nc{\map}{\mrm{Map}} \nc{\mchar}{\rm char}
\nc{\nz}{\rm NZ} \nc{\supp}{\mathrm Supp}
\nc{\Alg}{\mathbf{Alg}}
\nc{\Bax}{\mathbf{Bax}}
\nc{\bff}{\mathbf f}
\nc{\bfk}{{\bf k}}
\nc{\bfone}{{\bf 1}}
\nc{\bfx}{\mathbf x}
\nc{\bfy}{\mathbf y}
\nc{\base}[1]{\bfone^{\otimes ({#1}+1)}} 
\nc{\Cat}{\mathbf{Cat}}
\nc{\detail}{\marginpar{\bf More detail}
    \noindent{\bf Need more detail!}
    \svp}
\nc{\Int}{\mathbf{Int}}
\nc{\Mon}{\mathbf{Mon}}
\nc{\rbtm}{{shuffle }}
\nc{\rbto}{{Rota-Baxter }}
\nc{\remarks}{\noindent{\bf Remarks: }}
\nc{\Rings}{\mathbf{Rings}}
\nc{\Sets}{\mathbf{Sets}}
\nc{\BA}{{\Bbb A}} \nc{\CC}{{\Bbb C}} \nc{\DD}{{\Bbb D}}
\nc{\EE}{{\Bbb E}} \nc{\FF}{{\Bbb F}} \nc{\GG}{{\Bbb G}}
\nc{\HH}{{\Bbb H}} \nc{\LL}{{\Bbb L}} \nc{\NN}{{\Bbb N}}
\nc{\KK}{{\Bbb K}} \nc{\QQ}{{\Bbb Q}} \nc{\RR}{{\Bbb R}}
\nc{\TT}{{\Bbb T}} \nc{\VV}{{\Bbb V}} \nc{\ZZ}{{\Bbb Z}}
\nc{\cala}{{\mathcal A}} \nc{\calc}{{\mathcal C}}
\nc{\cald}{{\mathcal D}} \nc{\cale}{{\mathcal E}}
\nc{\calf}{{\mathcal F}} \nc{\calg}{{\mathcal G}}
\nc{\calh}{{\mathcal H}} \nc{\cali}{{\mathcal I}}
\nc{\call}{{\mathcal L}} \nc{\calm}{{\mathcal M}}
\nc{\caln}{{\mathcal N}} \nc{\calo}{{\mathcal O}}
\nc{\calp}{{\mathcal P}} \nc{\calr}{{\mathcal R}}
\nc{\cals}{{\mathcal S}}
\nc{\calt}{{\mathcal T}} \nc{\calw}{{\mathcal W}}
\nc{\calk}{{\mathcal K}} \nc{\calx}{{\mathcal X}}
\nc{\CA}{\mathcal{A}}
\nc{\fraka}{{\mathfrak a}}
\nc{\frakA}{{\mathfrak A}}
\nc{\frakb}{{\mathfrak b}}
\nc{\frakB}{{\mathfrak B}}
\nc{\frakC}{{\mathfrak C}}
\nc{\frakD}{{\mathfrak D}}
\nc{\frakg}{{\mathfrak g}}
\nc{\frakH}{{\mathfrak H}}
\nc{\frakL}{{\mathfrak L}}
\nc{\frakM}{{\mathfrak M}}
\nc{\bfrakM}{\overline{\frakM}}
\nc{\frakm}{{\mathfrak m}}
\nc{\frakP}{{\mathfrak P}}
\nc{\frakN}{{\mathfrak N}}
\nc{\frakp}{{\mathfrak p}}
\nc{\frakR}{{\mathfrak R}}
\nc{\frakS}{{\mathfrak S}}
\font\cyr=wncyr10
\font\cyrs=wncyr7
\begin{document}

\title[A new characterization of hereditary algebras]
{A new characterization of hereditary algebras}

%
%
\author[Yichao Yang]{Yichao Yang}
\address{D\'{e}partement de math\'{e}matiques, Universit\'{e} de Sherbrooke, Sherbrooke, Qu\'{e}bec, Canada, J1K 2R1}
\email{yichao.yang@usherbrooke.ca}

\author[Jinde Xu]{Jinde Xu}
\address{D\'{e}partement de math\'{e}matiques, Universit\'{e} de Sherbrooke, Sherbrooke, Qu\'{e}bec, Canada, J1K 2R1}
\email{jinde.xu@usherbrooke.ca}


\renewcommand{\thefootnote}{\alph{footnote}}
\setcounter{footnote}{-1} \footnote{}
\renewcommand{\thefootnote}{\alph{footnote}}
\setcounter{footnote}{-1} \footnote{\emph{2010 Mathematics Subject
Classification}: 16G20, 16E10.}

\renewcommand{\thefootnote}{\alph{footnote}}
\setcounter{footnote}{-1} \footnote{\emph{Keywords}: Tilting module, $\tau$-tilting module, Hereditary algebras.}


\begin{abstract}
In this short paper we prove that a finite dimensional algebra is hereditary if and only if there is no loop in its ordinary quiver and every $\tau$-tilting module is tilting.
\end{abstract}

\maketitle

\setcounter{section}{0}

\section{Introduction}

Throughout this paper, $A$ stands for a finite dimensional basic algebra over an algebraically closed field $k$ and $Q_{A}$ for its ordinary quiver. The $\tau$-tilting theory was recently introduced by Adachi, Iyama and Reiten in \cite{[AIR]}, which completes the classical tilting theory from the viewpoint of mutation. Note that a tilting $A$-module is always $\tau$-tilting and the converse is true if $A$ is hereditary.

It is therefore interesting to consider whether the heredity of $A$ can be characterized by the property that every $\tau$-tilting $A$-module is tilting. Indeed, it is not true as shown by the following example. Let $A$ be a non-simple local finite dimensional $k$-algebra, then $A$ is the only $\tau$-tilting $A$-module which is also tilting but $A$ is not hereditary. In this counterexample we observe that there exists a loop in $Q_{A}$, thus it is necessary to add some extra restrictions to our assumptions.

Our main result says that a finite dimensional algebra $A$ is hereditary if and only if there is no loop in $Q_{A}$ and every $\tau$-tilting $A$-module is tilting.

\section{Main result}

In what follows, let {\rm mod}-$A$ be the category of finitely generated right $A$-modules. For $M\in$ {\rm mod}-$A$, we denote by {\rm add} $M$ (respectively, {\rm Fac} $M$) the category of all direct summands (respectively, factor modules) of finite direct sums of copies of $M$. Moreover, we denote by $|M|$ the number of pairwise non-isomorphic indecomposable direct summands of $M$. Also, we denote by $P_{i}$ (respectively, $S_{i}$) the indecomposable projective (respectively, simple) $A$-module associated to $i$ for each vertex $i\in (Q_{A})_{0}$. Unless stated otherwise, all modules are assumed to be basic right modules.

Recall that an $A$-module $T$ is \emph{tilting} if it satisfies:

(1)~ {\rm pd}$_{A}T\leq 1$,

(2)~ {\rm Ext}$_{A}^{1}(T,T)=0$ and,

(3)~ there is an exact sequence $0\rightarrow A\rightarrow T_{0}\rightarrow T_{1}\rightarrow 0$ with $T_{0},T_{1}\in$ {\rm add} $T$.

It is well known that the condition (3) can be replaced by (3$^{\prime}$) $|T|=|A|$.

From \cite{[AIR]} we also recall the following definitions in $\tau$-tilting theory.

(1)~ $M$ in {\rm mod}-$A$ is called \emph{$\tau$-rigid} if {\rm Hom}$_{A}(M,\tau M)=0$, where $\tau$ is the Auslander-Reiten translation functor.

(2)~ $M$ in {\rm mod}-$A$ is called \emph{$\tau$-tilting} if $M$ is $\tau$-rigid and $|M|=|A|$.

(3)~ $M$ in {\rm mod}-$A$ is called \emph{support $\tau$-tilting} if there exists an idempotent $e$ of $A$ such that $M$ is a $\tau$-tilting ($A/\langle e\rangle$)-module.

The main result of this paper is the following.

\begin{theorem}\label{main result}
Let $A$ be a finite dimensional algebra. Then $A$ is hereditary if and only if there is no loop in $Q_{A}$ and every $\tau$-tilting $A$-module is tilting.
\end{theorem}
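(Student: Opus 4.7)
My plan is to handle the two directions separately. For ($\Rightarrow$), if $A$ is hereditary then $A \cong kQ_A$ for an acyclic quiver, so $Q_A$ has no loops; and since $\mathrm{gl.dim}(A) \leq 1$, any $\tau$-tilting module $T$ automatically satisfies $\mathrm{pd}\,T \leq 1$, so, combined with $\tau$-rigidity (equivalent to $\mathrm{Ext}^1_A(T,T)=0$ when $\mathrm{pd}\,T \leq 1$) and $|T|=|A|$, $T$ is tilting.

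For the non-trivial direction ($\Leftarrow$), the strategy is to show that every simple $A$-module has projective dimension at most one, which forces $\mathrm{gl.dim}(A) \leq 1$ and hence $A$ hereditary. The first step is to verify that each simple $S_i$ is $\tau$-rigid under the ``no loops'' hypothesis. Taking a minimal projective presentation $P_1 \xrightarrow{d} P_0 \to S_i \to 0$ with $P_0=P_i$, the identity $\mathrm{Hom}_A(S_i,\tau S_i) \cong D\,\mathrm{coker}\bigl(\mathrm{Hom}_A(P_0,S_i) \to \mathrm{Hom}_A(P_1,S_i)\bigr)$, arising from $\tau = D\,\mathrm{Tr}$, reduces the computation to $\dim_k \mathrm{Hom}_A(P_1,S_i)$, since the induced map is zero (every $\phi \colon P_0 \to S_i$ factors through $P_0/\mathrm{rad}\,P_0$ and $d(P_1) \subseteq \mathrm{rad}\,P_0$). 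This dimension equals the multiplicity of $P_i$ as a direct summand of $P_1$, which is the number of loops at $i$ in $Q_A$; under the ``no loops'' assumption it is zero, so $S_i$ is $\tau$-rigid.

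Next, by the Bongartz completion for $\tau$-rigid modules from \cite{[AIR]}, $S_i$ is a direct summand of some $\tau$-tilting $A$-module $T_i = S_i \oplus N_i$. By the hypothesis, $T_i$ is tilting, so $\mathrm{pd}(T_i) \leq 1$; in particular $\mathrm{pd}(S_i) \leq 1$. Since this holds for every simple $S_i$, we conclude $\mathrm{gl.dim}(A) \leq 1$, so $A$ is hereditary.

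The main obstacle I anticipate is invoking the right form of Bongartz completion: we need $S_i$ to be a summand of an honest $\tau$-tilting module (with $|T_i|=|A|$), not merely of a support $\tau$-tilting pair in which some indecomposable projectives might have been removed. This corresponds to checking that the torsion class ${}^\perp\tau S_i = \{X \in \mathrm{mod}\text{-}A : \mathrm{Hom}_A(X,\tau S_i)=0\}$ has full support in $\mathrm{mod}$-$A$. Carefully extracting the precise statement from the $\tau$-tilting machinery of \cite{[AIR]} is the delicate point.
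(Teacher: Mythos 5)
Your proof is correct and follows essentially the same route as the paper: both show that the no-loop hypothesis forces each simple module to be $\tau$-rigid, complete it to a $\tau$-tilting module via the Bongartz-type completion [AIR, Theorem 2.10], and use the hypothesis to conclude that its projective dimension is at most one, hence $\mathrm{gl.dim}\,A\leq 1$. The only difference is cosmetic --- you verify $\mathrm{Hom}_A(S_i,\tau S_i)=0$ by a direct computation with the minimal projective presentation, whereas the paper deduces it from $\mathrm{Ext}^{1}_A(S_1,\mathrm{Fac}\,S_1)=0$ together with [AS, Proposition 5.8] --- and the ``delicate point'' you flag is not an issue, since [AIR, Theorem 2.10] states precisely that any $\tau$-rigid module is a direct summand of a genuine $\tau$-tilting module (with $|T|=|A|$), which is exactly how the paper invokes it.
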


\begin{proof}
First we assume that $A$ is a hereditary algebra. Then for any $A$-module $M$, its projective dimension {\rm pd}$_{A}M\leq 1$. Thus every $\tau$-tilting $A$-module is tilting. Note that $A$ is also a finite dimensional algebra, it is clear that there is no loop in $Q_{A}$.

On the other hand if $A$ is not a hereditary algebra, then its global dimension {\rm gl.dim.}$A>1$. Because the global dimension of $A$ is also equals to the supremum of the set of projective dimensions of all simple $A$-modules, there exists a simple $A$-module $S_{i}$ such that {\rm pd}$_{A}S_{i}>1$. For simplicity, we may take $i=1$.

Since there is no loop in $Q_{A}$, we have {\rm Ext}$_{A}^{1}(S_{1},S_{1})=0$. Moreover, it can easily be seen that {\rm Fac} $S_{1}=${\rm add} $S_{1}$, hence {\rm Ext}$_{A}^{1}(S_{1},${\rm Fac} $S_{1})=0$. Now by [\ref{AS}, Proposition 5.8] we have {\rm Hom}$_{A}(S_{1},\tau S_{1})=0$, therefore the simple module $S_{1}$ is a $\tau$-rigid module.

According to [\ref{AIR}, Theorem 2.10], it follows that $S_{1}$ can be completed to a $\tau$-tilting module, that is, there exists another $A$-module $U$ such that $S_{1}\oplus U$ is a $\tau$-tilting module. On the other hand, we have ${\rm pd}_{A}(S_{1}\oplus U)={\rm max}\{{\rm pd}_{A}S_{1},{\rm pd}_{A}U\}\geq {\rm pd}_{A}S_{1}>1$, hence $S_{1}\oplus U$ is not a tilting module. Finally we find a $\tau$-tilting $A$-module $S_{1}\oplus U$ but not tilting, which contradicts our assumption. The proof of the theorem is now complete.
\end{proof}

From now on we will give some applications of Theorem \ref{main result}. Firstly, the famous \emph{no loop conjecture} stated in \cite{[I]} affirms that the ordinary quiver $Q_{A}$ of $A$ contains no loop if $A$ is of finite global dimension, while the \emph{strong no loop conjecture}, strengthens this to state that a vertex in the ordinary quiver $Q_{A}$ admits no loop if it has finite projective dimension, see [\ref{AIS}, \ref{I}]. Since for a finite dimensional algebra $A$ both these two conjectures have been proved in \cite{[ILP]}, immediately we have the following application.

\begin{corollary}\label{corollary 1}
Let $A$ be a finite dimensional algebra of finite global dimension. Then $A$ is hereditary if and only if every $\tau$-tilting $A$-module is tilting.
\end{corollary}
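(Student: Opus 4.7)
My plan is to prove both directions separately, handling the nontrivial converse by contradiction.

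For the forward implication I would simply note that when $A$ is hereditary, every $A$-module has projective dimension at most one, so the first two tilting axioms are automatic for any $\tau$-rigid module; together with the size condition $|M|=|A|$ already built into the definition of $\tau$-tilting, this makes every $\tau$-tilting module tilting. The standard fact that a finite dimensional hereditary basic algebra has no loops in its Gabriel quiver finishes this direction.

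For the converse I would argue by contradiction: suppose $Q_A$ has no loops but $A$ is not hereditary, so {\rm gl.dim.}$A>1$. Since the global dimension equals the supremum of the projective dimensions of the simple modules, I can pick a simple module, renamed $S_1$, with {\rm pd}$_A S_1 > 1$. The target is to exhibit $S_1$ as a direct summand of a $\tau$-tilting module that is not tilting, contradicting the hypothesis.

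To do so I would first show that $S_1$ is $\tau$-rigid. The no-loop hypothesis at vertex $1$ gives {\rm Ext}$_A^1(S_1, S_1) = 0$, and the simplicity of $S_1$ forces {\rm Fac}\,$S_1 = $ {\rm add}\,$S_1$; hence {\rm Ext}$_A^1(S_1, $ {\rm Fac}\,$S_1) = 0$. The Auslander-Smalo characterization of $\tau$-rigidity cited in the paper then yields {\rm Hom}$_A(S_1, \tau S_1) = 0$. Next I would invoke the completion theorem for $\tau$-rigid modules of Adachi-Iyama-Reiten to produce some $U$ such that $S_1 \oplus U$ is $\tau$-tilting, and the inequality {\rm pd}$_A(S_1 \oplus U) \geq $ {\rm pd}$_A S_1 > 1$ shows that $S_1 \oplus U$ violates tilting condition (1), contrary to the assumption.

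The main obstacle is the $\tau$-rigidity step, which converts the purely quiver-theoretic no-loop hypothesis at a single vertex into a Hom-vanishing statement about $\tau$. The collapse {\rm Fac}\,$S_1 = $ {\rm add}\,$S_1$ is what makes this possible under only the weak no-loop assumption, but the key nontrivial input is the translation from an Ext-vanishing to a $\tau$-rigidity statement provided by the Auslander-Smalo result. Once this step is made, the remainder of the argument is routine: apply the completion theorem and track the projective dimension of the resulting $\tau$-tilting module.
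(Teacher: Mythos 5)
Your argument for the converse direction opens with ``suppose $Q_A$ has no loops but $A$ is not hereditary,'' but the corollary does not hand you the absence of loops as a hypothesis; it hands you finite global dimension. The entire point of this corollary, as opposed to the main theorem it specializes, is that the no-loop hypothesis can be \emph{dropped} when ${\rm gl.dim.}\,A<\infty$, and that replacement is not free: it is exactly the (strong) no loop conjecture, proved by Igusa, Liu and Paquette, which asserts that a vertex whose simple module has finite projective dimension admits no loop --- so in particular an algebra of finite global dimension has no loops in its ordinary quiver. Your proof uses the no-loop property in an essential way (it is what yields ${\rm Ext}_A^1(S_1,S_1)=0$ and hence the $\tau$-rigidity of $S_1$), yet you never derive it from the stated hypothesis. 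As written, you have reproved the paper's main theorem --- and that part of your argument is correct and matches the paper's proof of it --- but not the corollary.

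The repair is short but requires citing this deep external input: first invoke the Igusa--Liu--Paquette theorem to conclude that $Q_A$ contains no loop, and then apply the main theorem (or your rerun of its proof). This is precisely how the paper deduces the corollary. Everything else in your proposal --- the collapse ${\rm Fac}\,S_1={\rm add}\,S_1$, the Auslander--Smal{\o} criterion converting Ext-vanishing on ${\rm Fac}\,S_1$ into ${\rm Hom}_A(S_1,\tau S_1)=0$, the completion of the $\tau$-rigid module $S_1$ to a $\tau$-tilting module, and the projective dimension count --- is sound.
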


Recall from \cite{[ASS]} that an $A$-module $M$ is \emph{faithful} if its right annihilator {\rm Ann} $M=\{a\in A | M~a=0\}$ vanishes and $M$ is \emph{sincere} if {\rm Hom}$_{A}(P,M)\neq 0$ for any projective $A$-module $P$. It is easy to see that any faithful module is sincere.

According to [\ref{AIR}, Proposition 2.2] it follows that $\tau$-tilting modules are precisely sincere support $\tau$-tilting modules and tilting modules are precisely faithful support $\tau$-tilting modules, hence to some extent the difference between $\tau$-tilting modules and tilting modules is analog of the difference between faithful modules and sincere modules. Now we have the following direct consequence, which is another application of Theorem \ref{main result}.

\begin{corollary}\label{corollary 2}
Let $A$ be a finite dimensional algebra with no loop in $Q_{A}$. If every sincere $A$-module is faithful, then $A$ is hereditary.
\end{corollary}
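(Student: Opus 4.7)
The plan is to reduce Corollary \ref{corollary 2} to Theorem \ref{main result} via the faithful/sincere dictionary for support $\tau$-tilting modules. Since the no-loop hypothesis on $Q_A$ is already assumed, by Theorem \ref{main result} it suffices to show that every $\tau$-tilting $A$-module is a tilting $A$-module.

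First, I would invoke the characterization mentioned in the paragraph immediately preceding the corollary, namely [\ref{AIR}, Proposition 2.2]: a module is $\tau$-tilting if and only if it is a sincere support $\tau$-tilting module, and a module is tilting if and only if it is a faithful support $\tau$-tilting module. This is the key structural result that makes the corollary a near-immediate consequence.

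Next, I would take an arbitrary $\tau$-tilting $A$-module $M$. By the first half of the above characterization, $M$ is sincere. Invoking the standing hypothesis that every sincere $A$-module is faithful, we deduce that $M$ is faithful. Since $M$ is $\tau$-tilting it is in particular a support $\tau$-tilting module (corresponding to the idempotent $e = 0$), so $M$ is a faithful support $\tau$-tilting module. By the second half of the characterization, $M$ is tilting.

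Combining these steps, under the hypotheses of the corollary every $\tau$-tilting $A$-module is tilting, and $Q_A$ has no loop, so Theorem \ref{main result} yields that $A$ is hereditary. There is no real obstacle here; the only subtle point worth double-checking is that the translation between ($\tau$-tilting, tilting) and (sincere, faithful) support $\tau$-tilting modules in [\ref{AIR}, Proposition 2.2] is applied in the correct direction, since the hypothesis "sincere $\Rightarrow$ faithful" must be fed into the sincere side and the conclusion read off on the faithful side.
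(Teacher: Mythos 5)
Your proposal is correct and matches the paper's intended argument exactly: the paper presents Corollary \ref{corollary 2} as a direct consequence of Theorem \ref{main result} combined with the dictionary from [\ref{AIR}, Proposition 2.2] ($\tau$-tilting $=$ sincere support $\tau$-tilting, tilting $=$ faithful support $\tau$-tilting), which is precisely the reduction you carry out. The paper's explicit proof of Corollary \ref{corollary 3} follows the same template, so your write-up fills in the omitted details in the same way the authors do there.
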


However, the converse of Corollary \ref{corollary 2} is not true in general. Let $A$ be the Kronecker algebra and $\mathcal{T}^{A}=\{\mathcal{T}_{\lambda}^{A}\}_{\lambda\in \mathbb{P}_{1}(k)}$ be the $\mathbb{P}_{1}(k)$-family of pairwise orthogonal standard stable tubes, then $A$ is a hereditary algebra and all modules lying on the mouth of the rank one tubes of $\mathcal{T}^{A}$ are sincere $A$-modules but not faithful, see \cite{[SS]}.

Indeed, in [\ref{R}, Corollary 2.3] Ringel have shown the following conclusion. Let $A$ be a hereditary algebra and $M$ be a sincere $A$-module with no self-extensions, then $M$ is also a faithful module. Conversely we have the following result.

\begin{corollary}\label{corollary 3}
Let $A$ be a finite dimensional algebra with no loop in $Q_{A}$. If every sincere $A$-module with no self-extensions is faithful, then $A$ is hereditary.
\end{corollary}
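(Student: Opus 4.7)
The plan is to reduce Corollary \ref{corollary 3} to Theorem \ref{main result}. Since the no-loop hypothesis on $Q_{A}$ is already given, it suffices to show that every $\tau$-tilting $A$-module is tilting. The whole argument should run through the characterizations of $\tau$-tilting and tilting modules as (sincere, resp.\ faithful) support $\tau$-tilting modules, stated in the paper just before Corollary \ref{corollary 2} via [\ref{AIR}, Proposition 2.2].

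Concretely, let $M$ be a $\tau$-tilting $A$-module. First, I would invoke [\ref{AIR}, Proposition 2.2] to conclude that $M$ is \emph{sincere}. Second, since $M$ is $\tau$-rigid we have $\mathrm{Hom}_{A}(M,\tau M)=0$; combining this with the Auslander-Reiten formula $\mathrm{Ext}_{A}^{1}(M,M)\cong D\,\overline{\mathrm{Hom}}_{A}(M,\tau M)$, where $\overline{\mathrm{Hom}}$ is a quotient of $\mathrm{Hom}$, yields $\mathrm{Ext}_{A}^{1}(M,M)=0$, so $M$ has \emph{no self-extensions}.

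Third, the hypothesis of Corollary \ref{corollary 3} applies to $M$: being sincere and having no self-extensions, $M$ must be faithful. Applying [\ref{AIR}, Proposition 2.2] once more in the other direction, the faithful support $\tau$-tilting module $M$ is a tilting module. Thus every $\tau$-tilting $A$-module is tilting, and by Theorem \ref{main result} the algebra $A$ is hereditary.

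The main (and essentially only) subtle point is the deduction that $\tau$-rigidity implies vanishing of self-extensions; this is a routine consequence of the Auslander-Reiten formula, but it is worth spelling out since the rest of the argument is a direct chain of implications built on the dictionary between ($\tau$-)tilting modules and (sincere/faithful) support $\tau$-tilting modules already recorded in the paper.
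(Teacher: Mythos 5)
Your proposal is correct and follows essentially the same route as the paper: reduce to Theorem \ref{main result}, use [\ref{AIR}, Proposition 2.2] to see a $\tau$-tilting module is sincere, deduce vanishing of self-extensions from $\tau$-rigidity, apply the hypothesis to get faithfulness, and conclude via [\ref{AIR}, Proposition 2.2] that the module is tilting. The only difference is that you make explicit the Auslander--Reiten formula step that the paper leaves implicit, which is a harmless (and welcome) clarification.
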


\begin{proof}
By Theorem \ref{main result} it suffices to show that every $\tau$-tilting $A$-module is tilting. Let $M$ be any $\tau$-tilting $A$-module, then by [\ref{AIR}, Proposition 2.2] $M$ is a sincere support $\tau$-tilting module. Thus $M$ is a $\tau$-rigid module, {\rm Hom}$_{A}(M,\tau M)=0$ and hence {\rm Ext}$_{A}^{1}(M,M)=0$, i.e., $M$ has no self-extensions. Now $M$ is a sincere $A$-module with no self-extensions, which is faithful by our assumption. Consequently $M$ is a faithful support $\tau$-tilting module. According to [\ref{AIR}, Proposition 2.2] again it follows that $M$ is actually a tilting module, which completes the proof.
\end{proof}

Using this, the following is now a direct consequence.

\begin{proposition}\label{proposition 1}
Let $A$ be a finite dimensional algebra. Then $A$ is hereditary if and only if there is no loop in $Q_{A}$ and every sincere $A$-module with no self-extensions is faithful.
\end{proposition}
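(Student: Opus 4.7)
The plan is to observe that Proposition \ref{proposition 1} is essentially a bookkeeping combination of Corollary \ref{corollary 3} (already proved above) and the result of Ringel cited in the paragraph preceding Corollary \ref{corollary 3}. So no new technical ingredient is needed; I just need to record both directions explicitly.

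For the forward direction, assume $A$ is hereditary. First I would invoke the standard fact already used in the proof of Theorem \ref{main result}: the ordinary quiver of a finite dimensional hereditary algebra has no loops. For the second condition, I would appeal directly to Ringel's [R, Corollary 2.3] quoted just above Corollary \ref{corollary 3}, which asserts that over a hereditary algebra every sincere module with no self-extensions is automatically faithful.

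For the converse direction, suppose $Q_A$ has no loops and every sincere $A$-module with no self-extensions is faithful. This is verbatim the hypothesis of Corollary \ref{corollary 3}, whose conclusion is exactly that $A$ is hereditary, so nothing further is required beyond citing that corollary. Since the heavy lifting (passing from the sincere/faithful condition through $\tau$-tilting theory back to Theorem \ref{main result}) is already absorbed in the proof of Corollary \ref{corollary 3}, I do not expect any obstacle here; the only care needed is to check that the two pairs of hypotheses match literally, which they do.
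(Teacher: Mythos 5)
Your proposal is correct and matches the paper's intent exactly: the paper presents Proposition \ref{proposition 1} as a ``direct consequence'' of Corollary \ref{corollary 3} (for the converse direction) combined with Ringel's result and the absence of loops for hereditary algebras (for the forward direction), which is precisely the decomposition you give. No gaps; the hypotheses do match literally as you checked.
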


We end with the following easy example.

Let $A$ be the algebra given by quiver $\xymatrix{1\ar[r]^{\alpha} & 2\ar[r]^{\beta} & 3}$ with the relation $\alpha \beta=0$. Then $A$ is not a hereditary algebra and there is no loop in $Q_{A}$. Thus by Theorem \ref{main result} there exists some $\tau$-tilting $A$-modules which are not tilting.

Indeed, it is easy to see that the class of tilting $A$-modules is $\{P_{1}\oplus P_{2}\oplus P_{3}, P_{1}\oplus P_{2}\oplus S_{2}\}$, while the class of $\tau$-tilting $A$-modules is $\{P_{1}\oplus P_{2}\oplus P_{3}, P_{1}\oplus P_{2}\oplus S_{2}\}\cup \{P_{1}\oplus P_{3}\oplus S_{1}\}$ since {\rm pd}$_{A}S_{1}=2>1$. \newline

{\bf Acknowledgements.}~~ The first author is supported by Fonds Qu\'{e}b\'{e}cois de la Recherche sur la Nature et les Technologies (Qu\'{e}bec, Canada) through the Merit Scholarship Program For Foreign Students. This work was carried out when both authors are postdoctoral fellows at Universit\'{e} de Sherbrooke. They would like to thank Prof. Shiping Liu for his helpful discussions and warm hospitality.

\end{document}